\theoremstyle{plain}
\newtheorem{thm}{Theorem}
\theoremstyle{remark}
\newtheorem{rem}{Remark}
\begin{document}

\title[Properties of three functions relating to exponential function]
{Properties of three functions relating to the exponential function and the existence of partitions of unity}

\author[F. Qi]{Feng Qi}
\address{Department of Mathematics, School of Science, Tianjin Polytechnic University, Tianjin City, 300387, China; School of Mathematics and Informatics, Henan Polytechnic University, Jiaozuo City, Henan Province, 454010, China}
\email{\href{mailto: F. Qi <qifeng618@gmail.com>}{qifeng618@gmail.com}, \href{mailto: F. Qi <qifeng618@hotmail.com>}{qifeng618@hotmail.com}, \href{mailto: F. Qi <qifeng618@qq.com>}{qifeng618@qq.com}}
\urladdr{\url{http://qifeng618.wordpress.com}}

\subjclass[2010]{Primary 26A48, 26E05; Secondary 26E10, 58A05}

\keywords{Exponential function; Derivative; Analyticity; Complete monotonicity; Absolute monotonicity; Logarithmically complete monotonicity}

\begin{abstract}
In the paper, the author studies properties of three functions relating to the exponential function and the existence of partitions of unity, including accurate and explicit computation of their derivatives, analyticity, complete monotonicity, logarithmically complete monotonicity, absolute monotonicity, and the like.
\end{abstract}

\thanks{This paper was typeset using \AmS-\LaTeX}

\maketitle

\section{Introduction}

Throughout this paper, we denote the set of all positive integers by $\mathbb{N}$, and the set of all real numbers by $\mathbb{R}$.
\par
In the theory of differentiable manifolds, the function
\begin{equation}\label{part-unity-function}
f(t)=
\begin{cases}
e^{-1/t}, & t>0\\
0, & t\le0
\end{cases}
\end{equation}
plays an indispensable role in the proof of the existence of partitions of unity. See, for example, \cite[p.~10, Lemma~1.10]{Warner}.
\par
Similarly, we define
\begin{equation}
g(t)=e^{-1/t}, \quad t\in\mathbb{R}\setminus\{0\}
\end{equation}
and
\begin{equation}
h(t)=e^{1/t},\quad t\in\mathbb{R}\setminus\{0\}.
\end{equation}
It is clear that
\begin{equation}\label{g(-t)=h(t)}
g(-t)=h(t)
\end{equation}
for $t\ne0$ and $f(t)=g(t)$ for $t>0$.
\par
The task of this paper is to study properties of these three functions, including accurate and explicit computation of their derivatives, analyticity, complete monotonicity, absolute monotonicity, logarithmically complete monotonicity, and the like.

\section{Properties}

In this section, we will study properties of the functions $f(t)$, $g(t)$, and $h(t)$.

\subsection{Derivatives}
We first accurately compute derivatives of $i$-th order for $i\in\mathbb{N}$.

\begin{thm}\label{exp-frac1x-n-deriv-thm}
For $i\in\mathbb{N}$ and $t\ne0$, we have
\begin{equation}\label{exp-frac1x-expans}
h^{(i)}(t)=(-1)^ie^{1/t}\frac1{t^{2i}}\sum_{k=0}^{i-1}a_{i,k}t^{k},
\end{equation}
where
\begin{equation}\label{a-i-k-dfn}
a_{i,k}=\binom{i}{k}\binom{i-1}{k}{k!}
\end{equation}
for all $0\le k\le i-1$.
\end{thm}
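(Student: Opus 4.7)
The plan is to prove the formula by induction on $i$.

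\medskip

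\textbf{Base case.} For $i=1$, direct differentiation of $h(t)=e^{1/t}$ gives $h'(t)=-e^{1/t}/t^2$. Since $a_{1,0}=\binom{1}{0}\binom{0}{0}\,0!=1$, the formula holds.

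\medskip

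\textbf{Inductive step.} Assume the formula holds for some $i\ge 1$; write $P_i(t):=\sum_{k=0}^{i-1}a_{i,k}t^k$, so that $h^{(i)}(t)=(-1)^ie^{1/t}t^{-2i}P_i(t)$. Applying the product rule to the three factors $e^{1/t}$, $t^{-2i}$, and $P_i(t)$, and using $(e^{1/t})'=-e^{1/t}/t^2$, one immediately gets
\begin{equation*}
h^{(i+1)}(t)=(-1)^{i+1}e^{1/t}t^{-2(i+1)}\bigl[P_i(t)+2it\,P_i(t)-t^2P_i'(t)\bigr].
\end{equation*}
So the induction reduces to showing
\begin{equation*}
P_{i+1}(t)=P_i(t)+2it\,P_i(t)-t^2P_i'(t).
\end{equation*}
Expanding the right-hand side and collecting the coefficient of $t^k$, this is equivalent to the recurrence
\begin{equation*}
a_{i+1,k}=a_{i,k}+(2i-k+1)\,a_{i,k-1}, \qquad 0\le k\le i,
\end{equation*}
with the boundary convention $a_{i,-1}=0$ and $a_{i,i}=0$ (the boundary cases $k=0$ and $k=i$ come out correctly by direct inspection, the latter since $a_{i+1,i}=(i+1)!=(i+1)\,a_{i,i-1}$).

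\medskip

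\textbf{Verifying the recurrence.} This is the only nontrivial step, but it is a short manipulation of binomial coefficients. Plugging in $a_{i,k}=\binom{i}{k}\binom{i-1}{k}k!$, using Pascal's rule $\binom{i+1}{k}=\binom{i}{k}+\binom{i}{k-1}$ together with $\binom{i}{k}-\binom{i-1}{k}=\binom{i-1}{k-1}$, and then the absorption identity $k\binom{i}{k}=i\binom{i-1}{k-1}$, the required identity collapses to the elementary equality
\begin{equation*}
\binom{i-1}{k-1}=\frac{i-k+1}{i}\binom{i}{k-1},
\end{equation*}
which follows at once from writing both sides as factorials. This completes the induction.

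\medskip

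The only potentially irritating part is the combinatorial bookkeeping in the last step; the analytic content is nothing more than one application of the product rule. An alternative would be Faà di Bruno's formula applied to $h=\exp\circ(1/t)$, but the induction above is shorter and yields the explicit recursion, which will likely be useful in the sequel.
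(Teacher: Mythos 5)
Your proof is correct and follows essentially the same route as the paper: induction on $i$, one application of the product rule, and reduction to the coefficient recurrence $a_{i+1,k}=a_{i,k}+(2i-k+1)a_{i,k-1}$. In fact you go one step further than the paper, which asserts this recurrence for $a_{i,k}=\binom{i}{k}\binom{i-1}{k}k!$ without checking it, whereas you actually verify the binomial identity (which, after clearing factorials, amounts to $(i+1)i=(i+1-k)(i-k)+(2i-k+1)k$).
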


\begin{proof}
We prove this theorem by induction on $i\in\mathbb{N}$.
\par
From
\begin{equation*}
h'(t)=-\frac1{t^2}e^{1/t},
\end{equation*}
it follows that
\begin{equation}\label{a-0-1=1}
a_{1,0}=1.
\end{equation}
This means that the equality~\eqref{exp-frac1x-expans} is valid for $i=1$.
\par
Assume the equality~\eqref{exp-frac1x-expans} is valid for some $i>2$.
\par
By virtue of the inductive hypothesis, a direct differentiation gives
\begin{align*}
h^{(i+1)}(t)&=\bigl[h^{(i)}(t)\bigr]'\\
&=\Biggl[(-1)^ie^{1/t}\frac1{t^{2i}}\sum_{k=0}^{i-1}a_{i,k}t^{k}\Biggr]'\\
&=(-1)^{i+1}e^{1/t}\frac1{t^{2(i+1)}}\Biggl(\sum_{k=0}^{i-1}a_{i,k}t^{k}
+2i\sum_{k=0}^{i-1}a_{i,k}t^{k+1} -\sum_{k=1}^{i-1}ka_{i,k}t^{k+1}\Biggr)\\
&=(-1)^{i+1}e^{1/t}\frac1{t^{2(i+1)}}\Biggl[\sum_{k=0}^{i-1}a_{i,k}t^{k}
+2i\sum_{k=1}^{i}a_{i,k-1}t^{k} -\sum_{k=2}^{i}(k-1)a_{i,k-1}t^{k}\Biggr]\\
&=(-1)^{i+1}e^{1/t}\frac1{t^{2(i+1)}}\Biggl\{a_{i,0} +\sum_{k=1}^{i-1}[a_{i,k}+(2i-k+1)a_{i,k-1}]t^{k} +(i+1)a_{i,i-1}t^i\Biggr\}\\
&=(-1)^{i+1}e^{1/t}\frac1{t^{2(i+1)}}\Biggl(a_{i+1,0} +\sum_{k=1}^{i-1}a_{i+1,k}t^{k} +a_{i+1,k}t^i\Biggr)\\
&=(-1)^{i+1}e^{1/t}\frac1{t^{2(i+1)}}\sum_{k=0}^{i}a_{i+1,k}t^{k}.
\end{align*}
The proof of Theorem~\ref{exp-frac1x-n-deriv-thm} is completed.
\end{proof}

\begin{thm}\label{g(t)-derivative-thm}
For $i\in\mathbb{N}$ and $t\ne0$,
\begin{equation}\label{g(t)-derivative}
g^{(i)}(t)=\frac1{e^{1/t}t^{2i}} \sum_{k=0}^{i-1}(-1)^ka_{i,k}{t^{k}},
\end{equation}
where $a_{i,k}$ is determined by~\eqref{a-i-k-dfn}.
\end{thm}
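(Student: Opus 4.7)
The plan is to avoid redoing induction from scratch and instead leverage the relation \eqref{g(-t)=h(t)} together with the already-established formula \eqref{exp-frac1x-expans} for $h^{(i)}(t)$. From $g(-t)=h(t)$ (equivalently, $g(t)=h(-t)$), the chain rule applied $i$ times yields
\begin{equation*}
g^{(i)}(t)=(-1)^i h^{(i)}(-t).
\end{equation*}

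Substituting $-t$ for $t$ in Theorem~\ref{exp-frac1x-n-deriv-thm} gives
\begin{equation*}
h^{(i)}(-t)=(-1)^i e^{-1/t}\frac{1}{(-t)^{2i}}\sum_{k=0}^{i-1}a_{i,k}(-t)^{k}
=(-1)^i e^{-1/t}\frac{1}{t^{2i}}\sum_{k=0}^{i-1}(-1)^k a_{i,k}t^{k},
\end{equation*}
using $(-t)^{2i}=t^{2i}$. Multiplying by $(-1)^i$ collapses the two sign factors, producing
\begin{equation*}
g^{(i)}(t)=\frac{1}{e^{1/t}t^{2i}}\sum_{k=0}^{i-1}(-1)^k a_{i,k}t^{k},
\end{equation*}
which is precisely \eqref{g(t)-derivative}.

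There is no real obstacle here; the only thing to watch is the sign bookkeeping, namely verifying that the $(-1)^i$ from the chain rule, the $(-1)^i$ from applying Theorem~\ref{exp-frac1x-n-deriv-thm} at $-t$, and the $(-1)^{2i}=1$ arising from $(-t)^{2i}$ combine to leave exactly a factor of $(-1)^k$ in the sum. An independent sanity check by direct induction (mirroring the argument for $h$, but with $g'(t)=\frac{1}{t^2}e^{-1/t}$ as the base case) is available if desired, but the chain-rule reduction is cleaner and makes the relationship between the two expansions transparent.
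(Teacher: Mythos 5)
Your proposal is correct and follows exactly the paper's own route: the paper likewise derives the formula from $g^{(i)}(t)=[h(-t)]^{(i)}=(-1)^ih^{(i)}(-t)$ combined with Theorem~\ref{exp-frac1x-n-deriv-thm}. You simply carry out the sign bookkeeping explicitly, which the paper leaves to the reader.
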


\begin{proof}
This follows readily from combination of
\begin{equation*}
g^{(i)}(t)=[h(-t)]^{(i)}=(-1)^ih^{(i)}(-t)
\end{equation*}
with Theorem~\ref{exp-frac1x-n-deriv-thm}.
\end{proof}

\begin{thm}\label{f(t)-derivative-thm}
For $i\in\mathbb{N}$,
\begin{equation}\label{f(t)-derivative}
f^{(i)}(t)=
\begin{cases}\displaystyle
\frac1{e^{1/t}t^{2i}} \sum_{k=0}^{i-1}(-1)^ka_{i,k}{t^{k}}, & t>0,\\
0, & t\le0,
\end{cases}
\end{equation}
where $a_{i,k}$ is determined by~\eqref{a-i-k-dfn}.
\end{thm}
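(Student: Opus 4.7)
The plan is to split the real line into the three regions $t>0$, $t<0$, and $t=0$, and to verify the formula separately on each.

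For $t>0$ the function $f$ coincides with $g$, so a direct application of Theorem~\ref{g(t)-derivative-thm} yields the desired expression for $f^{(i)}(t)$. For $t<0$ the function $f$ is identically zero on an open neighborhood of $t$, hence $f^{(i)}(t)=0$ trivially. The only real work is at $t=0$, where I would prove by induction on $i$ that $f^{(i)}(0)$ exists and equals $0$.

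For the base case $i=0$, we have $f(0)=0$ by definition. For the inductive step, assume $f^{(i-1)}(0)=0$. Then from the definition of the derivative,
\begin{equation*}
f^{(i)}(0)=\lim_{t\to 0}\frac{f^{(i-1)}(t)-f^{(i-1)}(0)}{t}=\lim_{t\to 0}\frac{f^{(i-1)}(t)}{t}.
\end{equation*}
The left-hand limit as $t\to 0^-$ is $0$ because $f^{(i-1)}$ vanishes identically on $(-\infty,0)$. For the right-hand limit as $t\to 0^+$, I would insert the explicit formula from Theorem~\ref{g(t)-derivative-thm}, so that
\begin{equation*}
\frac{f^{(i-1)}(t)}{t}=\frac{1}{e^{1/t}}\sum_{k=0}^{i-2}(-1)^k a_{i-1,k}\,t^{k-2(i-1)-1}
\end{equation*}
is a finite linear combination of terms of the form $t^{-m}e^{-1/t}$ with $m\ge 1$.

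The substitution $u=1/t\to+\infty$ reduces each such term to $u^m e^{-u}$, which tends to $0$. This takes care of the inductive step, completes the evaluation $f^{(i)}(0)=0$, and finishes the proof. The only genuinely delicate step is establishing that all one-sided derivatives at the origin vanish; the estimate $\lim_{u\to+\infty}u^m e^{-u}=0$ is the crucial ingredient, but it is a classical fact that requires no further argument beyond the comparison between exponential decay and polynomial growth.
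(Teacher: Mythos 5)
Your proposal is correct, and it is considerably more complete than the paper's own proof, which consists of the single sentence that the theorem ``follows from the definition of $f(t)$ and Theorem~\ref{g(t)-derivative-thm}.'' The paper only addresses the point $t=0$ in the subsequent analyticity theorem, and there it does so by computing $f^{(i)}(0)=\lim_{t\to0^+}f^{(i)}(t)$, which tacitly invokes the mean-value-theorem lemma that a derivative at a point exists and equals the limit of nearby derivatives when that limit exists. Your route through the difference quotient $f^{(i-1)}(t)/t$, combined with the induction on $i$ and the elementary limit $\lim_{u\to+\infty}u^m e^{-u}=0$, avoids that lemma entirely and is the genuinely rigorous way to establish the case $t=0$ of the stated formula; the cases $t>0$ and $t<0$ are handled identically in both arguments. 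One small edge case to tidy up: for $i=1$ the inductive step concerns $f^{(0)}(t)/t=e^{-1/t}/t$, and Theorem~\ref{g(t)-derivative-thm} does not literally apply to the zeroth derivative (your sum $\sum_{k=0}^{i-2}$ would be empty), so you should treat $i=1$ directly from the definition of $f$; the term $t^{-1}e^{-1/t}$ still has the form $t^{-m}e^{-1/t}$ with $m\ge1$, so your limiting argument goes through unchanged.
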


\begin{proof}
This follows from the definition of $f(t)$ by~\eqref{part-unity-function} and Theorem~\ref{g(t)-derivative-thm}.
\end{proof}

\subsection{Analyticity}
A real function $q(t)$ is said to be analytic at a point $t_0$ if it possesses derivatives of all orders and agrees with its Taylor series in a neighborhood of $t_0$.

\begin{thm}
The function $f(t)$ defined by~\eqref{part-unity-function} is infinitely differentiable on $\mathbb{R}$ and
\begin{equation}\label{f-der-at0=0}
f^{(i)}(0)=0
\end{equation}
for all $i\in\mathbb{N}\cup\{0\}$, but it is not analytic at $t=0$.
\end{thm}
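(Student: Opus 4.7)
The plan is to establish three things in turn: smoothness of $f$ away from $0$, vanishing of every derivative at $0$, and non-analyticity at $0$.

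Away from $0$, smoothness is automatic: on $(-\infty,0)$ the function is identically zero and on $(0,\infty)$ it coincides with $g(t)=e^{-1/t}$, whose derivatives are given by Theorem~\ref{g(t)-derivative-thm}. So the only genuine work is at the single point $t=0$, where I will proceed by induction on $i$. The induction hypothesis is that $f$ is $i$ times differentiable on $\mathbb{R}$, that $f^{(i)}(0)=0$, and that for $t>0$ the derivative $f^{(i)}(t)$ is given by the formula in Theorem~\ref{f(t)-derivative-thm}. For $i=0$ this amounts to the continuity of $f$ at $0$, which follows from $\lim_{t\to 0^+}e^{-1/t}=0$.

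For the inductive step, I need to show $f^{(i+1)}(0)$ exists and equals $0$. The left-hand difference quotient at $0$ is identically zero because $f^{(i)}$ vanishes on $(-\infty,0]$ by the inductive hypothesis and Theorem~\ref{f(t)-derivative-thm}. For the right-hand difference quotient, using the explicit formula from Theorem~\ref{f(t)-derivative-thm}, I compute
\begin{equation*}
\frac{f^{(i)}(t)-f^{(i)}(0)}{t}=\frac{1}{e^{1/t}\,t^{2i+1}}\sum_{k=0}^{i-1}(-1)^k a_{i,k}t^k,\qquad t>0.
\end{equation*}
The crucial auxiliary fact is that $\lim_{t\to0^+}\frac{1}{e^{1/t}\,t^{m}}=0$ for every $m\in\mathbb{N}\cup\{0\}$, which follows by the substitution $s=1/t$ and the elementary bound $e^s\ge s^{m+1}/(m+1)!$. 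Applied term by term to the finite sum, this forces the right-hand difference quotient to tend to $0$. So $f^{(i+1)}(0)=0$, and this closes the induction and simultaneously shows $f^{(i+1)}$ exists on all of $\mathbb{R}$.

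Finally, for non-analyticity at $0$: by the previous step the Taylor series of $f$ at $0$ is $\sum_{i=0}^\infty \frac{f^{(i)}(0)}{i!}t^i\equiv 0$, which has value $0$ at every $t$; but $f(t)=e^{-1/t}>0$ for every $t>0$, so $f$ does not agree with its Taylor series in any neighborhood of $0$. The main obstacle is the inductive verification that the right-hand difference quotient vanishes, but this reduces cleanly to the standard limit $e^{-1/t}/t^m\to 0$, so no essentially new idea is required.
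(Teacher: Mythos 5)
Your proof is correct and follows the same overall strategy as the paper: show that every derivative of $f$ vanishes at $0$ using the explicit formula for $f^{(i)}$ on $(0,\infty)$ together with the limit $e^{-1/t}/t^m\to0$ as $t\to0^+$, and then observe that the Taylor series at $0$ is identically zero while $f>0$ on $(0,\delta)$. The one place you genuinely diverge is the key step: the paper simply writes $f^{(i)}(0)=\lim_{t\to0^+}f^{(i)}(t)=0$, which presupposes that the $i$-th derivative exists at $0$ (indeed Theorem~\ref{f(t)-derivative-thm} already asserts $f^{(i)}(0)=0$ without argument), whereas you establish existence honestly by induction, computing the one-sided difference quotients of $f^{(i)}$ at $0$ and using $\lim_{t\to0^+}e^{-1/t}/t^{2i+1-k}=0$. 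Your version is the more rigorous one; the paper's shortcut could instead be repaired by appealing to the standard fact that a function continuous at a point whose derivative tends to a limit there is differentiable at that point, but your difference-quotient induction avoids needing even that.
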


\begin{proof}
By Theorem~\ref{f(t)-derivative-thm}, it is obvious that, to prove the infinite differentiability of $f(t)$, it suffices to show the infinite differentiability of $f(t)$ at $t=0$. For this, we just need to compute the limit
\begin{equation}
f^{(i)}(0)=\lim_{t\to0^+}f^{(i)}(t)=0
\end{equation}
for all $i\ge0$, which can be calculated directly from~\eqref{f(t)-derivative}.
Consequently, the function $f(t)$ is infinitely differentiable on $(0,\infty)$.
\par
Suppose that the function $f(t)$ is analytic at $t=0$, then it should have a Taylor series
\begin{equation}\label{f(t)-exp-taylor}
f(t)=\sum_{\ell=0}^\infty\frac{f^{(\ell)}(0)}{\ell!}t^\ell
\end{equation}
for $t\in(-\delta,\delta)$, where $\delta>0$. Using~\eqref{f-der-at0=0}, the right hand side of~\eqref{f(t)-exp-taylor} becomes $0$ for all $t\in(-\delta,\delta)$. This contracts with the positivity of $f(t)$ on $(0,\delta)\subset(0,\infty)$. As a result, the function $f(t)$ is not analytic at $t=0$. The proof is complete.
\end{proof}

\begin{thm}
The point $t=0$ is a jump-infinite discontinuous point of the functions $g(t)$ and $h(t)$ on $\mathbb{R}$.
\end{thm}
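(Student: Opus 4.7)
The plan is to verify the claim by computing all four one-sided limits of $g$ and $h$ at $t=0$; the operative definition is that a ``jump-infinite'' discontinuity is one at which both one-sided limits exist as extended real numbers, one of them finite and the other infinite.

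First I would handle $h(t)=e^{1/t}$ directly. Since $1/t\to+\infty$ as $t\to 0^+$ and $1/t\to-\infty$ as $t\to 0^-$, the continuity and monotonicity of the exponential function together with the standard facts $\lim_{s\to+\infty}e^{s}=+\infty$ and $\lim_{s\to-\infty}e^{s}=0$ yield
\[
\lim_{t\to 0^+}h(t)=+\infty \qquad\text{and}\qquad \lim_{t\to 0^-}h(t)=0.
\]
This displays one finite and one infinite one-sided limit at $0$, which is exactly the claim for $h$.

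Next I would deduce the statement for $g$ from the symmetry relation $g(-t)=h(t)$ in~\eqref{g(-t)=h(t)}. Substituting $-t$ for $t$ interchanges the left and right one-sided limits, so
\[
\lim_{t\to 0^+}g(t)=\lim_{t\to 0^-}h(t)=0, \qquad \lim_{t\to 0^-}g(t)=\lim_{t\to 0^+}h(t)=+\infty,
\]
which again exhibits one finite and one infinite one-sided limit. Alternatively, one could repeat the direct argument using $-1/t\to-\infty$ as $t\to 0^+$ and $-1/t\to+\infty$ as $t\to 0^-$.

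I do not expect a genuine obstacle. The only mild subtlety is terminological: one must pin down what ``jump-infinite'' means, since it is not entirely standard. Once that convention is fixed (finite limit on one side, infinite limit on the other), the proof reduces to the routine limit computations above, and the symmetry \eqref{g(-t)=h(t)} eliminates the need to redo the work for $g$.
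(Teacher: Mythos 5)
Your proposal is correct and follows essentially the same route as the paper: both arguments reduce the claim to the four one-sided limits $\lim_{t\to0^\pm}h(t)$ and $\lim_{t\to0^\pm}g(t)$, finding one finite ($0$) and one infinite on each side. The only cosmetic difference is that you derive the limits for $g$ from the symmetry $g(-t)=h(t)$, whereas the paper writes out all four limits directly; the computations are identical.
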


\begin{proof}
This follows from
\begin{equation*}
\lim_{t\to0^-}h(t)=\lim_{t\to0^-}e^{1/t}=0=\lim_{t\to0^+}e^{-1/t}=\lim_{t\to0^+}g(t)
\end{equation*}
and
\begin{equation*}
\lim_{t\to0^+}h(t)=\lim_{t\to0^+}e^{1/t}=\infty=\lim_{t\to0^-}e^{-1/t}=\lim_{t\to0^-}g(t).
\end{equation*}
The proof is thus complete.
\end{proof}

\subsection{Complete monotonicity}
A function $q(x)$ is said to be completely monotonic on an interval $I$ if $q(x)$ has derivatives of all
orders on $I$ and
\begin{equation}\label{CM-dfn}
(-1)^{n}q^{(n)}(x)\ge0
\end{equation}
for $x\in I$ and $n\ge0$. See~\cite[Chapter~IV]{widder}.

\begin{thm}\label{h(t)-CM-thm}
The function $h(t)$ is completely monotonic on $(0,\infty)$.
\end{thm}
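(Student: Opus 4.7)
The plan is to read complete monotonicity straight off the explicit derivative formula proved in Theorem~\ref{exp-frac1x-n-deriv-thm}. That theorem gives, for $i\in\mathbb{N}$ and $t\ne 0$,
\[
h^{(i)}(t)=(-1)^i e^{1/t}\frac{1}{t^{2i}}\sum_{k=0}^{i-1}a_{i,k}t^{k},
\]
so multiplying through by $(-1)^i$ yields
\[
(-1)^i h^{(i)}(t)=e^{1/t}\frac{1}{t^{2i}}\sum_{k=0}^{i-1}a_{i,k}t^{k}.
\]
The strategy is therefore to argue that every factor on the right-hand side is nonnegative for $t>0$, and to handle the $i=0$ case separately.

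First I would dispose of $i=0$ by noting that $h(t)=e^{1/t}>0$ for all $t>0$, which gives the sign condition \eqref{CM-dfn} at $n=0$. Next, for $i\ge 1$ and $t>0$, I would observe the following: the exponential factor $e^{1/t}$ is positive; the power $1/t^{2i}$ is positive because $t>0$; and each monomial $t^{k}$ with $0\le k\le i-1$ is positive. It remains only to check that the coefficients $a_{i,k}$ defined by~\eqref{a-i-k-dfn} are nonnegative, which is immediate since
\[
a_{i,k}=\binom{i}{k}\binom{i-1}{k}k!
\]
is a product of nonnegative integers. Consequently the finite sum $\sum_{k=0}^{i-1}a_{i,k}t^{k}$ is nonnegative (in fact positive), so $(-1)^i h^{(i)}(t)\ge 0$ on $(0,\infty)$.

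There is no real obstacle here; the only thing to verify is the nonnegativity of the $a_{i,k}$, which is visible from the closed form in~\eqref{a-i-k-dfn}. The proof is essentially a one-line appeal to Theorem~\ref{exp-frac1x-n-deriv-thm} together with the sign of the coefficients and of the elementary factors on $(0,\infty)$.
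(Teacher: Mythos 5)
Your proof is correct and follows the same route as the paper: both read the sign of $(-1)^i h^{(i)}(t)$ directly off the explicit formula of Theorem~\ref{exp-frac1x-n-deriv-thm}, using positivity of $e^{1/t}$, $1/t^{2i}$, $t^k$, and the coefficients $a_{i,k}$ on $(0,\infty)$. You are only slightly more explicit than the paper in spelling out the $i=0$ case and the nonnegativity of $a_{i,k}$ from the closed form~\eqref{a-i-k-dfn}.
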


\begin{proof}
By Theorem~\ref{exp-frac1x-n-deriv-thm}, it follows that
\begin{equation*}
(-1)^ih^{(i)}(t)=e^{1/t}\frac1{t^{2i}}\sum_{k=0}^{i-1}a_{i,k}t^{k}>0
\end{equation*}
on $(0,\infty)$. So the function $h(t)$ is completely monotonic on $(0,\infty)$.
\end{proof}

\subsection{Logarithmically complete monotonicity}

A positive function $f(x)$ is said to be logarithmically completely monotonic on an interval $I\subseteq\mathbb{R}$ if it has derivatives of all orders on $I$ and its logarithm $\ln f(x)$ satisfies
\begin{equation}\label{lcm-dfn}
(-1)^k[\ln f(x)]^{(k)}\ge0
\end{equation}
for $k\in\mathbb{N}$ on $I$. A logarithmically completely monotonic function on $I$ must be completely monotonic on $I$, but not conversely. See~\cite{CBerg, absolute-mon-simp.tex, compmon2}, the expository article~\cite{bounds-two-gammas.tex} and closely-related references therein.

\begin{thm}
The function $h(t)$ and $\frac1{g(t)}$ are logarithmically completely monotonic on $(0,\infty)$, and so are the functions $g(t)$ and $\frac1{h(t)}$ on $(-\infty,0)$.
\end{thm}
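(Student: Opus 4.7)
The plan is to collapse the four assertions to two via the obvious reciprocal identities, and then handle each by a direct computation of the derivatives of the logarithm.

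First I would observe that $1/g(t) = e^{1/t} = h(t)$ and $1/h(t) = e^{-1/t} = g(t)$, so the statement reduces to the two claims that $h(t)$ is logarithmically completely monotonic on $(0,\infty)$ and that $g(t)$ is logarithmically completely monotonic on $(-\infty,0)$. In both cases the function in question is strictly positive on the relevant interval, so its logarithm is well defined.

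For $h$ on $(0,\infty)$ the computation is short. Since $\ln h(t) = 1/t$, a one-line induction on $k$ gives the standard identity $[\ln h(t)]^{(k)} = (-1)^{k} k!/t^{k+1}$. Substituting into \eqref{lcm-dfn} yields $(-1)^{k}[\ln h(t)]^{(k)} = k!/t^{k+1} > 0$ on $(0,\infty)$, so $h$ is logarithmically completely monotonic there; the identity $1/g = h$ immediately delivers the same conclusion for $1/g$.

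For $g$ on $(-\infty,0)$ the cleanest route is to exploit the reflection identity \eqref{g(-t)=h(t)}, which gives $\ln g(t) = (\ln h)(-t)$; the chain rule then reads the derivatives of $\ln g$ at $t\in(-\infty,0)$ off from those of $\ln h$ at $-t\in(0,\infty)$, so the sign condition is inherited from what has already been proved for $h$. Alternatively, one can differentiate $\ln g(t) = -1/t$ directly and apply the same formula for $(1/t)^{(k)}$. The only genuinely delicate point, and hence the place where a slip is easiest to make, is the sign bookkeeping on the negative half-line: one must track how the parities of $k$ and of the power $t^{k+1}$ interact with the external factor $(-1)^{k}$ in \eqref{lcm-dfn}. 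Once that accounting is carried out, the inequality is mechanical, and the identity $1/h = g$ transfers the conclusion to $1/h$ at no extra cost.
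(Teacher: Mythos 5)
Your treatment of $h$ and $1/g$ on $(0,\infty)$ is correct and is exactly the paper's intended ``standard argument'': from $\ln h(t)=1/t$ one gets $[\ln h(t)]^{(k)}=(-1)^kk!/t^{k+1}$, hence $(-1)^k[\ln h(t)]^{(k)}=k!/t^{k+1}>0$ there, and $1/g=h$ transfers the conclusion.

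The second half of your proposal breaks down at precisely the step you flag as ``the only genuinely delicate point'' and then defer. Carry out that bookkeeping: $\ln g(t)=-1/t$ gives $[\ln g(t)]^{(k)}=(-1)^{k+1}k!/t^{k+1}$, so
\[
(-1)^k[\ln g(t)]^{(k)}=-\frac{k!}{t^{k+1}},
\]
which for $t<0$ is \emph{negative} whenever $k$ is odd. Already at $k=1$ the condition \eqref{lcm-dfn} fails: $g(t)=e^{-1/t}$ increases from $1$ to $\infty$ as $t$ runs from $-\infty$ to $0^-$, and no increasing function can be (logarithmically) completely monotonic under the definition \eqref{lcm-dfn}. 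The reflection identity does not ``inherit'' the sign condition as you assert; since $[\ln g]^{(k)}(t)=(-1)^k[\ln h]^{(k)}(-t)$, the extra factor $(-1)^k$ cancels the alternation, and what the computation actually yields is $[\ln g]^{(k)}(t)=k!/|t|^{k+1}>0$ for every $k$ on $(-\infty,0)$ --- that is, $\ln g$ is \emph{absolutely} monotonic there, the logarithmic analogue of Theorem~\ref{g(t)-AM-thm}, which is a different (indeed incompatible, for nonconstant functions) property. So under the stated definition the second half of the theorem is not provable because it is not true as written; it would need either an explicitly modified convention for complete monotonicity on intervals to the left of the origin, or a restatement in terms of logarithmic absolute monotonicity. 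Your claim that ``the inequality is mechanical'' once the signs are tracked is exactly where the argument fails, and the final sentence transferring the (non-)conclusion to $1/h$ inherits the same defect.
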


\begin{proof}
This may be deduced from $\ln h(t)=\frac1t$ and $\ln g(t)=-\frac1t$ and standard arguments.
\end{proof}

\subsection{Absolute monotonicity}
An infinitely differentiable function $q(t)$ defined on an interval $I\subseteq\mathbb{R}$ is said to be absolutely monotonic if
\begin{equation}\label{log-abs-funct}
q^{(i)}(t)\ge0
\end{equation}
holds on $I$ for all $i\in\mathbb{N}\cup\{0\}$. See~\cite{absolute-mon-simp.tex, widder}.

\begin{thm}\label{g(t)-AM-thm}
The function $g(t)$ is absolutely monotonic on $(-\infty,0)$.
\end{thm}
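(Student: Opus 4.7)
The plan is to exploit the explicit formula for $g^{(i)}(t)$ furnished by Theorem~\ref{g(t)-derivative-thm} together with the symmetry relation $g(-t)=h(t)$. There are two essentially equivalent routes.

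The most direct route is to start from
\begin{equation*}
g^{(i)}(t)=\frac{1}{e^{1/t}t^{2i}}\sum_{k=0}^{i-1}(-1)^ka_{i,k}t^{k}
\end{equation*}
and verify pointwise nonnegativity on $(-\infty,0)$. For such $t$, the factor $e^{-1/t}>0$ and $t^{2i}>0$ trivially, so the sign of $g^{(i)}(t)$ is controlled by the polynomial factor. Substituting $t=-s$ with $s>0$ turns $(-1)^k t^k$ into $s^k$, so the sum collapses to $\sum_{k=0}^{i-1}a_{i,k}s^k$, which is strictly positive since each coefficient $a_{i,k}=\binom{i}{k}\binom{i-1}{k}k!$ defined in~\eqref{a-i-k-dfn} is positive. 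Together with $g(t)=e^{-1/t}>0$ for the case $i=0$, this yields $g^{(i)}(t)\ge0$ on $(-\infty,0)$ for every $i\in\mathbb{N}\cup\{0\}$, which is exactly the definition~\eqref{log-abs-funct} of absolute monotonicity.

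An equally clean alternative is to invoke the identity $g(t)=h(-t)$ from~\eqref{g(-t)=h(t)}, which gives $g^{(i)}(t)=(-1)^{i}h^{(i)}(-t)$. Setting $s=-t>0$ and applying the complete monotonicity of $h$ on $(0,\infty)$ established in Theorem~\ref{h(t)-CM-thm}, one obtains $g^{(i)}(t)=(-1)^{i}h^{(i)}(s)\ge0$ immediately. This route is the shortest and requires no calculation beyond a change of variables.

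There is no real obstacle here; the only thing to be careful about is the sign bookkeeping when translating between $h$ on $(0,\infty)$ and $g$ on $(-\infty,0)$, or equivalently when checking that the alternating polynomial $\sum(-1)^{k}a_{i,k}t^{k}$ becomes a polynomial with positive coefficients after the substitution $t\mapsto -s$. I would present the second route in the main text and mention the first as a direct verification from Theorem~\ref{g(t)-derivative-thm}.
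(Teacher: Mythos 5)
Your first route is exactly the paper's proof (which simply cites the explicit derivative formula \eqref{g(t)-derivative} and reads off the signs), and your sign bookkeeping under $t\mapsto -s$ is correct. Your second route via $g(t)=h(-t)$ and Theorem~\ref{h(t)-CM-thm} is also sound and is precisely the equivalence the paper records in its final remark, so nothing here is genuinely new.
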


\begin{proof}
This comes from~\eqref{g(t)-derivative} in Theorem~\ref{g(t)-derivative-thm}.
\end{proof}

\subsection{Remarks}

\begin{rem}
To the best of my knowledge, the derivatives~\eqref{exp-frac1x-expans} were described but without explicit and accurate expressions in many books.
\end{rem}

\begin{rem}
By definitions, it is easy to see that if $h(t)$ is completely monotonic on an interval $I$ then $h(-t)$ is absolutely monotonic on $-I$, the symmetrical interval of $I$ with respect to $0$. This implies that Theorem~\ref{h(t)-CM-thm} and Theorem~\ref{g(t)-AM-thm} are equivalent to each other.
\end{rem}

\end{document}